\newtheorem{definition}{Definition}[section]
\newtheorem{lemma}{Lemma}[section]
\newtheorem{proposition}{Proposition}[section]
\newtheorem{theorem}{Theorem}[section]
\newtheorem{corollary}{Corollary}[section]
\theoremstyle{remark}
\numberwithin{equation}{section}
\newcommand{\Spin}{\mathrm{Spin}}
\def\f{\varphi}
\def\n{\nabla}
\def\s{\sigma}
\def\J{\mathcal{J}}
\def\V{\mathcal{V}}
\def\H{\mathcal{H}}
\def\S{\mathbb{S}}
\def\Ric{\mathrm{Ric}}
\def \Ca{\mathbb{O}}
\def \CM{\mathbb{C}}
\def \HM{\mathbb{H}}
\def \PM{\mathbb{P}}
\def\F{\mathrm{F}}
\def\Cl{\mathrm{Cl}}
\def\Sp{\mathrm{Sp}}
\def\SU{\mathrm{SU}}
\def\U{\mathrm{U}}
\def\SO{\mathrm{SO}}
\def\E{\mathrm{E}}
\def\spin{\mathfrak{spin}}
\def\Gr{\mathrm{Gr}}
\def\End{\mathrm{End}}
\title[Clifford Structures and Twistor Spaces]
{Twistor Spaces of Riemannian Manifolds with Even Clifford Structures}
\author{Gerardo Arizmendi}
\email{gerardo@cimat.mx}
\address{Centro de Investigaci\'on en Matem\'aticas, P.O. Box 402, 36000 Guanajuato, GTO,  M\'exico}
\author{Charles Hadfield}
\email{charles.hadfield@ens.fr}
\address{DMA, \'Ecole Normale Sup\'erieure, 45 rue d'Ulm,
75230 Paris cedex 05, France}
\begin{document}

\maketitle

\begin{abstract}
In this paper we introduce the  twistor space of a Riemannian manifold with an even Clifford structure. This notion generalizes the twistor space of quaternion-Hermitian manifolds and weak-$\Spin(9)$ structures. We also construct  almost complex structures on the twistor space for parallel even Clifford structures and check their integrability. Moreover, we prove that in some cases one can give K\"ahler and Nearly-K\"ahler metrics to these spaces.
\end{abstract}

\section{Introduction}

The notion of twistor space was first introduced by Roger Penrose in \cite{Penrose}. Following the ideas of Penrose, the twistor construction for a $4$-dimensional Riemannian manifold was developed in \cite{AHS}. This was
later generalized for even dimensional manifolds in \cite{Obrian}.
The twistor space $Z$  of an 
even-dimensional Riemannian manifold admits a natural almost complex structure, and  it is well known that such a twistor space is 
complex if and only if the manifold is self-dual for $\dim(M)=4$ and locally conformally flat for $\dim(M)\geq6$ \cite{AHS} and \cite{Obrian}. A converse theorem (the so called reverse Penrose construction) in dimension $4$ has been used to construct half-conformally flat Einstein manifolds.

In another generalization, the twistor space $Z$ of quaternion-K\"ahler manifolds was defined in \cite{Salamon}. This is an $\S^2$-bundle of pointwise Hermitian structures compatible with the quaternionic structure. It is well known that this bundle admits two almost complex structures $\J$ and $\tilde \J$, one of which is always integrable and the other is never integrable \cite{Eells-Salamon}. Moreover, the manifold $Z$  admits two Einstein metrics $h$ and $\tilde h$ such that $(Z,\J,h)$ is K\"ahler-Einstein \cite{Bergery,Salamon} and $(Z,\tilde\J,\tilde h)$ is nearly-K\"ahler \cite{Grantcharov}. 

In \cite{Friedrich}, the twistor space was defined in the context of weak $\Spin(9)$ structures on $16$-dimensional Riemannian manifolds, which correspond to rank 9 even Clifford structures \cite{Moroianu}. Additionally, this twistor construction was studied for $\mathbb R^{16}$, which carries a parallel flat even Clifford structure, the Cayley plane $\F_4/\Spin(9)$, which carries a parallel non-flat even Clifford structure, and  $\S^1\times \S^{15}$, which carries a non-parallel  even Clifford structure. In the first two cases, the twistor space admits a K\"ahler metric and in the last case the twistor space is a complex manifold which does not admit a K\"ahler metric.

In this paper, we generalize these constructions to even Clifford structures of arbitrary rank $r\geq3$, noting that ranks $3$ and $9$ constitute two of the aforementioned constructions. We construct a $\tilde\Gr(2,r)$-bundle of pointwise Hermitian structures. This bundle admits an almost complex structure and we prove theorems analogous to those in \cite{Bergery,Salamon} and  \cite{Grantcharov}.

The paper is organized as follows. Section \ref{sec2} is devoted to explain the notion of even Clifford structures. In section \ref{mainsec} we explain the construction of the twistor space of Riemannian manifolds with even Clifford structures and its almost complex structures and check their integrability.  The main results of the paper are Theorems~\ref{Tcomplex}  and \ref{TKahler} as well as Corollary~\ref{Cnearly}. 

{\em Acknowledgments:} The first author would like to thank Rafael Herrera for his encouragement and comments and Andrei Moroianu for useful discussions and hospitality during a visit to Universit\'e de Versailles-St Quentin. The second author would also like to thank Andrei Moroianu for valued guidance of his Masters during which part of this work was completed. The first author was partially supported by CONACyT scholarship, the second author by a PGSM International scholarship.

\section{Even Clifford structures}\label{sec2}

The definition of a rank $r$ even Clifford structure (or $\Cl_r^0$ structures) on a Riemannian manifold was given in \cite{Moroianu}:

\begin{definition} A rank $r$ even Clifford structure on a Riemannian manifold $(M,g)$ is an oriented rank $r$ Euclidean bundle $E$ over $M$ together with a non-vanishing algebra bundle morphism, called a {\em Clifford morphism}, $\f:\Cl^0(E)\rightarrow \End(TM)$ which maps $\Lambda^2 E$ into the bundle of skew-symmetric endomorphisms $\End^-(TM)$.
\end{definition}

This definition contains almost Hermitian structures and quaternion-Hermitian structures as particular cases, for rank $2$ and $3$ respectively. An even Clifford structure $(M,g,E)$ is called {\em
  parallel}, if there exists a metric connection $\n^E$ on $E$ such
that $\f$ is connection preserving, i.e.
\[
\f(\n^E_X\s)=\n^g_X\f(\s)
\]
for every tangent vector $X\in TM$ and section $\s$ of $\Cl^0(E)$. For instance, a manifold with a parallel $\Cl_2^0$ structure is actually a K\"ahler manifold, while a manifold with a parallel $\Cl_3^0$ structure is a quaternion-K\"ahler manifold. Manifolds with $\Spin(7)$ holonomy correspond to $8$-dimensional manifolds with $\Cl_7^0$ parallel structures. Thus, one can hope this definition gives a more general framework in which to study these geometries.

\begin{definition} \label{defflat} A parallel even Clifford structure $(M,E,\n^E)$ is called {\em
    flat} if the connection $\n^E$ is flat.
\end{definition}

The list of complete simply connected Riemannian manifolds $M$ carrying a
parallel rank $r$ even Clifford structure was found in \cite{Moroianu} and is given in the tables below.
\begin{center}
\begin{tabular}{|r|l|c|}\hline
$r$    &  $M$   &  dimension of $M$ \\
 \hline\hline
2            &   K\"ahler  &  $2m,\ m\ge 1$      \\
\hline
3 and 4           &  hyper-K\"ahler  &  $4q,\ q\ge 1$      \\
\hline
4            &  reducible hyper-K\"ahler &
$4(q^++q^-),$   $ q^+\ge 1$, $q^-\ge 1$   \\
\hline
arbitrary          & $\Cl^0_{r}$ representation space    &
multiple of $N_0(r)$     \\

\hline
\end{tabular}
\vskip .2cm
 Table 1. Manifolds with a flat even Clifford
  structure
\end{center}
\vskip .6cm

\begin{center}
\begin{tabular}{|r|l|c|}\hline
$r$ &  $M$   &  dimension of $M$ \\
 \hline\hline
2           & K\"ahler  &  $2m,\ m\ge 1$      \\
\hline
3           &  quaternion-K\"ahler 
(QK)  &  $4q,\ q\ge 1$      \\
\hline
4         &  product
of two QK manifolds& 
$4(q^++q^-)$      \\
\hline\hline
5         &  QK    &  8      \\
\hline
6           &   K\"ahler  &  8      \\
\hline
7          &  $\Spin(7)$ holonomy   &  8      \\
\hline
8         &  Riemannian  &  8      \\
\hline\hline
5   & $\Sp(k+2)/\Sp(k)\times\Sp(2)$ & $8k,\ k\ge 2$ \\
\hline
6 & $\SU(k+4)/{\rm S}(\U(k)\times\U(4))$ & $8k,\ k\ge 2$ \\
\hline
8   & $\SO(k+8)/\SO(k)\times\SO(8)$ & $8k,\ k\ge 2$ \\
\hline\hline
9      & $\hskip1.15cm\Ca \PM^2=\F_4/\Spin(9) $  &  16 \\
\hline
10     & $(\CM\otimes \Ca) \PM^2=\E_6/\Spin(10)\cdot\U(1)$   &  32 \\
\hline
12     & $(\HM\otimes \Ca) \PM^2=\E_7/\Spin(12)\cdot\SU(2)$   &  64 \\
\hline
16     & $(\Ca\otimes \Ca) \PM^2=\E_8/\Spin^+(16)$   &  128 \\
\hline
\end{tabular}
\vskip .2cm
Table 2. Manifolds with a parallel non-flat even Clifford
  structure
\end{center}
 For the sake of simplicity, in Table 2 the non-compact duals of the compact symmetric spaces have been omitted. $N_0(r)$ denotes the dimension of the irreducible representations of $\Cl_r^0$. 
For further details on even Clifford structures, we refer to \cite{Moroianu2,Moroianu}.

\section{The twistor space of an even Clifford structure}\label{mainsec}

Let $(M,g,E)$ be a manifold with even Clifford structure. Let $\f$ denote the Clifford map. Given $x\in M$, let $J_{ij}:=\f(e_i\cdot e_j)$ where $\{e_1,\dots,e_r\}$ is an orthonormal basis for $E_x$ and $\cdot$ denotes Clifford multiplication. For each $x$ we consider the subspace $Z_x$ of $\End(T_xM)$ where
\[
Z_x = \left\{J=\sum_{1\leq i<j\leq r}a_{ij}J_{ij} \,|\, J^2=-\textrm{Id}_{T_xM},\, a_{ij}\in \mathbb R\right\}
\]
and define the twistor space of the even Clifford structure to be the disjoint union
\[
Z = \bigsqcup_{x\in M} Z_x.
\]
We will denote by $\pi$ the projection onto $M$. This is a bundle of pointwise orthogonal complex structures. For a parallel $\Cl_3^0$ structure this coincides with the definition of the twistor space of a quaternion-K\"ahler manifold, where the fibre is homeomorphic to $\S^2$. It is not hard to see that for a $\Cl_4^0$ structure the fibre of the twistor space is homeomorphic to $\S^2\times \S^2$, which corresponds to the isomorphism between $\Spin(4)$ and $\Spin(3)\times \Spin(3)$. In general the fibre at each point is isomorphic to $\tilde\Gr(2,r)$, the Grassmannian of oriented $2$-planes in $\mathbb{R}^r$, as we see in the next lemma.

\begin{lemma} Let $A\in \spin(r)\subset \Cl_r^0$, then  $A^2=-1$ if and only if there exist $v_1,v_2\in \mathbb{R}^r$ orthonormal vectors such that $A=v_1\cdot v_2$.
\end{lemma}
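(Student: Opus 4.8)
The plan is to prove the two implications separately; the forward direction is an immediate Clifford computation, while the converse rests on reducing $A$ to a normal form.

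For the ``if'' direction, suppose $v_1,v_2\in\mathbb{R}^r$ are orthonormal. Clifford multiplication gives $v_1\cdot v_1=v_2\cdot v_2=-1$ and $v_1\cdot v_2=-v_2\cdot v_1$, whence $(v_1\cdot v_2)^2=v_1v_2v_1v_2=-v_1v_1v_2v_2=-1$; note also that $v_1\cdot v_2$ lies in the span of the degree-two monomials, i.e.\ in $\spin(r)\subset\Cl_r^0$. For the converse I would first record two structural facts. Every $g\in\OO(r)$ extends, by the universal property of the Clifford algebra (since $g(v)\cdot g(v)=-|g(v)|^2=-|v|^2=v\cdot v$), to an algebra automorphism $\tilde g$ of $\Cl_r$ which preserves each graded piece $\Cl_r^k$, hence preserves $\spin(r)$ and commutes with squaring; so it suffices to verify the claim for $A$ in a conveniently chosen $\OO(r)$-orbit. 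Writing $A=\sum_{i<j}a_{ij}\,e_i\cdot e_j$ and applying the standard normal form for skew-symmetric matrices (equivalently, for $2$-forms), I may therefore assume there is an orthonormal basis $(f_1,\dots,f_r)$ of $\mathbb{R}^r$ and reals $\lambda_1,\dots,\lambda_k$ (with $2k\le r$) such that $A=\sum_{i=1}^k\lambda_i\,b_i$, where $b_i:=f_{2i-1}\cdot f_{2i}$.

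Now I would compute $A^2$. The bivectors $b_i$ have pairwise disjoint supports, so they commute, and $b_i^2=-1$ as above; hence
\[
A^2=\sum_{i=1}^k\lambda_i^2\,b_i^2+2\sum_{i<j}\lambda_i\lambda_j\,b_ib_j
=-\sum_{i=1}^k\lambda_i^2+2\sum_{i<j}\lambda_i\lambda_j\,f_{2i-1}f_{2i}f_{2j-1}f_{2j}.
\]
For $i<j$ the element $f_{2i-1}f_{2i}f_{2j-1}f_{2j}$ is a product of four distinct basis vectors, and the ordered monomials $e_{i_1}\cdots e_{i_m}$ form a basis of $\Cl_r$; in particular these degree-four monomials are linearly independent among themselves and independent of $1\in\Cl_r$. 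Thus $A^2=-1=(-1)\cdot 1$ forces $\lambda_i\lambda_j=0$ for all $i<j$ and $\sum_i\lambda_i^2=1$. So at most one $\lambda_i$ is nonzero, and since the sum of squares is $1$ exactly one is, say $\lambda_1$ after relabelling, with $\lambda_1=\pm1$. Then $A=\lambda_1 f_1\cdot f_2=(\lambda_1 f_1)\cdot f_2$, and $v_1:=\lambda_1 f_1$, $v_2:=f_2$ are orthonormal with $A=v_1\cdot v_2$. (The degenerate case $k=0$, i.e.\ $A=0$, is automatically excluded since $0\neq-1$.)

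The only genuinely non-elementary ingredient is the reduction to normal form, together with the observation that $\OO(r)$ acts on $\Cl_r$ by grading-preserving automorphisms; once these are in place the argument is just bookkeeping with the standard monomial basis, and I expect no essential obstacle beyond being careful that the four indices appearing in each cross term $b_ib_j$ are indeed distinct, which is guaranteed by the block structure of the normal form.
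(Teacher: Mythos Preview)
Your proof is correct and follows essentially the same route as the paper: both reduce $A$ via an orthonormal change of basis (the normal form for $2$-forms) to $A=\sum_i\lambda_i f_{2i-1}\cdot f_{2i}$, then read off from $A^2=-1$ the conditions $\sum_i\lambda_i^2=1$ and $\lambda_i\lambda_j=0$ for $i<j$, forcing exactly one $\lambda_i=\pm1$. You are simply more explicit about why the change of basis is legitimate (the $\OO(r)$-action extends to grading-preserving Clifford automorphisms) and why the cross terms must vanish separately (linear independence of the degree-four monomials), points the paper leaves implicit.
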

\begin{proof}
 
 Let $A\in \spin(r)\subset \Cl_0^r$ and $\{e_1,\dots,e_r\}$ an orthornormal basis for $\mathbb{R}^r$, then under a change of basis, we can suppose that $A=\sum_{i=1}^{[\frac{r}{2}]} a_i e_{2i-1}\cdot e_{2i}$. The condition $A^2=-1$ yields the equations
\[\sum_{i=1}^{[\frac{r}{2}]}a_i^2=1, \mbox{\hspace { 1 cm}} a_ia_j=0, (i<j). \]
The solutions of these equations are the $r-$tuples $(\pm1,0,\dots,0)$, $(0,\pm1,0,\dots,0)$, $\dots$, $(0,\dots,\pm1)$. Therefore   
$A=\pm e_{2i-1}\cdot e_{2i}$ for some $1\leq i\leq [\frac{r}{2}]$.
Conversely, if $A=v_1\cdot v_2$ with $v_1$ and $v_2$ orthonormal, then $A^2=v_1\cdot v_2\cdot v_1\cdot v_2=-v_1^2v_2^2=-1$, which proves the assertion.  
\end{proof}

{\bf Remark:} Another way to prove this is by using that an element $A$ in $\Lambda^2\mathbb{ R}^r$  is decomposable if and only if $A \wedge A=0$. 

\subsection{Almost complex structures on the twistor space}

Consider $\tilde\Gr(2,r)$ as a Hermitian-symmetric space. Its complex structure can be given using Clifford multiplication. Let $z\in\tilde\Gr(2,r)$, which for a suitable frame can be written as $z=e_1\wedge e_2=e_1\cdot e_2$. The tangent space $T_{z}\tilde\Gr(2,r)$ can be identified with 
\[
\textrm{span}(e_i \cdot e_j \,|\, i\in\{1,2\} , j\in\{3,\dots, r\} )= \left\{\sum_{s=3}^r{\alpha_s e_1\cdot e_s+\beta_s e_2\cdot e_s} \,|\, \alpha_s,\beta_s\in \mathbb R \right\}.
\] 
The complex structure is then given by $\tilde{J}_{z}(v)=z\cdot v$.

The Levi-Civita connection on $M$ induces a connection on $Z$. For each $S\in Z$, the connection gives a splitting $T_SZ=V_S\oplus H_S$ where $V_S=\mathrm{ker}(\pi_*)$ is isomorphic to $T_{\pi(S)} \tilde\Gr(2,r)$, the isomorphism given by the differential of the Clifford map, and $H_S$, the horizontal subspace, is isomorphic to $T_{\pi(S)}M$. We recall the usual construction of almost complex structures on $Z$. Given $U\in V_S$ and $X\in H_S$
we define  \[\J (U+X)_S=\hat{J}(U)+\pi^{-1}_*(S\pi_*(X))\] where  $\hat {J}(U)=\f_*\tilde{J}\f_*^{-1}(U)=SU$.

 For rank $3$, this is the construction of the almost complex structure for quaternion-K\"ahler manifolds, so $(Z,\J )$ is a complex manifold, see \cite{Bergery, Salamon}. For a rank $4$ parallel even Clifford structures, the manifold is locally a product of two quaternion-K\"ahler manifolds \cite{Moroianu}. On the other hand, $\tilde \Gr(2,4)$ is isomorphic to $\S^2\times \S^2$ as K\"ahler manifolds, so in this case the twistor space is the  product of the twistor spaces of two quaternion-K\"ahler manifolds. In particular, it is a complex manifold.

From now on, we will suppose that  $(M,g)$ carries a parallel even Clifford structure of rank $r\geq5$. We treat the 8-dimensional case first. In this case, the rank should be $5,6,7$ or $8$ and the following holds: 
\begin{itemize}
\item $r= 5$: In this case the manifold is known to be quaternion-K\"ahler \cite{Moroianu}. The twistor space has fibre isomorphic to $\Sp(2)/\U(2)$ and has been considered in \cite{Burstall}. The twistor space is complex exactly when $M$ is locally symmetric.
\item $r= 6$: In this case the manifold is known to be K\"ahler \cite{Moroianu}. The twistor space has fibre $\tilde\Gr(2,6)$ and has been considered in \cite{Obrian,Burstall}. The twistor is complex exactly when the Bochner tensor of $M$ vanishes.
\item $r= 7$: In this case the manifold has $\Spin(7)$ holonomy. The twistor space has fiber $\SO(7)/\SO(5)\times \SO(2)$. According to \cite{Burstall}, the twistor space in this case is never complex.
\item $r=8$: In this case the manifold is Riemannian. The twistor fibre is isomorphic to $\SO(8)/\U(4)$ which is the usual fibre of the twistor space defined for even dimensional Riemannian manifolds. As mentioned in the introduction, the twistor space is complex if and only if $M$ is conformally flat.
\end{itemize}
Now we will assume that $n\neq8$.

\begin{lemma}\label{curvatura} Let M be a Riemannian manifold of dimension $n\neq 8$ carrying a parallel even Clifford structure of rank $r>4$, then for every  $S\in Z$ and $X,Y\in T_{\pi(S)}M$, the curvature $R$ of the Levi-Civita connection satisfies
\[[R_{SX,SY},S]-S[R_{SX,Y},S]-S[R_{X,SY},S]-[R_{X,Y},S]=0.\]
\end{lemma}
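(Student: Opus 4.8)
The plan is to exploit the fact that the twistor bundle $Z$ is parallel with respect to the Levi-Civita connection. Since the even Clifford structure is parallel, the curvature operator $R_{X,Y}\in\mathfrak{so}(T_{\pi(S)}M)$ acts on the fibre, and its action must be tangent to $Z_{\pi(S)}$; concretely, for $S\in Z$ with $S^2=-\mathrm{Id}$, the commutator $[R_{X,Y},S]$ lies in the subspace $\mathrm{span}(S\cdot e_i\cdot e_j,\,e_i\cdot e_j\cdot S)$ coming from differentiating the condition defining $Z_x$ along curves in the fibre. The key identity to establish first is the \emph{first Bianchi-type symmetry}: using $S^2=-\mathrm{Id}$ one has $[R_{SX,SY},S]=-S\,R_{SX,SY}\,S-R_{SX,SY}$ rearrangements, and the left-hand side of the claimed identity can be rewritten, after substituting $X\mapsto SX$, $Y\mapsto SY$ in the algebraic Bianchi identity, as an expression that vanishes precisely when $M$ is "$S$-invariant" in the appropriate sense.

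The concrete route I would take: first reduce to a single pointwise almost-complex structure $S$ (the identity is fibrewise and pointwise in $M$, so fix $x=\pi(S)$ and work in $\mathrm{End}(T_xM)$). Second, observe that because $r>4$, the Lie algebra $\mathfrak{spin}(r)$ acts irreducibly (or at least without the troublesome extra invariant summand that appears when $r=3,4$ or in the $8$-dimensional coincidences), so the holonomy-type restriction from Moroianu's classification forces a strong compatibility between $R$ and $\varphi(\Lambda^2 E)$. Third, write $\mathcal{R}(S):=[R_{SX,SY},S]-S[R_{SX,Y},S]-S[R_{X,SY},S]-[R_{X,Y},S]$ and check that $\mathcal{R}$ is a \emph{tensorial} quantity in $X,Y$ and that it satisfies $\mathcal{R}(S)=-S\,\mathcal{R}(S)\,$ type sign relations forcing it into a subspace on which, by the classification, it must vanish. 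The cleanest formulation is probably: expand using the algebraic Bianchi identity $R_{X,Y}+R_{Y,X}=0$ together with $R_{SX,SY}=R_{X,Y}$ modulo a correction term that is itself a curvature of the reduced structure, which by parallelism is controlled.

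The main obstacle, and the step that genuinely uses the rank hypothesis $r>4$ and $n\neq 8$, is showing that the correction term $R_{SX,SY}-R_{X,Y}$ (which in general need not vanish — it vanishes iff $S$ is parallel, i.e. the structure is "$S$-Kähler") combines with the three other terms to cancel identically. I expect one must invoke the explicit structure of the curvature of a manifold with a parallel even Clifford structure: by Moroianu's results, for $r>4$ and $n\neq8$ the curvature lies in a restricted subspace of $\mathrm{Sym}^2(\Lambda^2 T_xM)$, and one checks the identity by a (finite, representation-theoretic) computation on that subspace, using that $J_{ij}:=\varphi(e_i\cdot e_j)$ satisfy the Clifford relations. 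A slicker alternative, which I would attempt first, is to recognize the left-hand side as the obstruction to $\mathcal{J}$ being integrable restricted to horizontal vectors, and then to note that for $r>4$, $n\neq 8$ the fibre $\tilde\Gr(2,r)$ is \emph{not} of the exceptional type that produces a nonzero Nijenhuis contribution from the curvature — but making this rigorous still reduces to the same algebraic verification, so the representation-theoretic computation appears unavoidable as the technical core.
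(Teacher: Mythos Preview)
Your proposal circles around the right ingredients but never lands on the one fact that actually drives the proof, and several of the proposed mechanisms are either irrelevant or would not work as stated.

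The paper's argument is far more concrete than anything you sketch. One fixes a frame so that $S=J_{12}$, disposes of the flat case (where $M$ itself is flat by Moroianu--Semmelmann), and in the non-flat case invokes the explicit formula from their Proposition~2.10 (valid precisely when $r>4$ and $n\neq 8$):
\[
[R_{X,Y},J_{12}]=\kappa\sum_{s>2}\bigl(g(J_{s1}X,Y)J_{s2}-g(J_{s2}X,Y)J_{s1}\bigr).
\]
With this in hand, the four terms of the identity are computed line by line using only $J_{ij}J_{ik}=J_{jk}$, and they visibly cancel. No Bianchi identity, no representation-theoretic decomposition, no abstract ``$R_{SX,SY}=R_{X,Y}$ modulo correction'' is used or needed.

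The gaps in your plan are: (i) the algebraic Bianchi identity relates $R_{X,Y}Z$ cyclically in three vectors, whereas the statement concerns commutators $[R_{\cdot,\cdot},S]$; your proposed substitution $X\mapsto SX$, $Y\mapsto SY$ does not produce the required relation, and you give no indication of how it would; (ii) the assertion that $R_{SX,SY}-R_{X,Y}$ is ``controlled by parallelism'' is vague and, in the absence of the explicit commutator formula above, you have no handle on it; (iii) the suggestion that the identity follows from $\mathcal R(S)$ landing in a subspace forced to be zero by classification is not substantiated. The genuine input from $r>4$, $n\neq 8$ is exactly the explicit commutator formula above; once you see that, the lemma is a two-line check, and the abstract machinery you propose is unnecessary.
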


\begin{proof}
It suffices to prove the proposition for $S=J_{12}$. If the parallel even Clifford structure is flat, then by Theorem $2.9$ in \cite{Moroianu} the manifold is flat, so $R(X,Y)=0$ for all $X,Y \in T_pM$. If the parallel even Clifford structure is not flat and $n\neq8$, the proof of Proposition $2.10$ in \cite{Moroianu}, explicitly Equation~(15), implies the existence of a non-zero constant $\kappa$ such that
\begin{align}\label{curveq}
[R_{X,Y},J_{12}]&=\kappa\sum_{s>2}g(J_{s1}X,Y)J_{s2}-g(J_{s2}X,Y)J_{s1}.
\end{align}
Using the properties of the endomorphisms $J_{ij}$, Lemma 2.4~\cite{Moroianu}, specifically, $J_{ij}\circ J_{ik}=J_{jk}$ for $i,j,k$ mutually distinct, the result follows upon summing the following four calculations.
\begin{align*}
[R_{J_{12}X,J_{12}Y},J_{12}]&=\kappa\sum_{s>2}-g(J_{s1}X,Y)J_{s2}+g(J_{s2}X,Y)J_{s1} \\
-J_{12}[R_{J_{12}X,Y},J_{12}]&=\kappa\sum_{s>2}-g(J_{s2}X,Y)J_{s1}+g(J_{s1}X,Y)J_{s2} \\
-J_{12}[R_{X,J_{12}Y},J_{12}]&=\kappa\sum_{s>2}-g(J_{s2}X,Y)J_{s1}+g(J_{s1}X,Y)J_{s2} \\
-[R_{X,Y},J_{12}]&=\kappa\sum_{s>2}-g(J_{s1}X,Y)J_{s2}+g(J_{s2}X,Y)J_{s1} \qedhere
\end{align*}
\end{proof}

\begin{theorem}\label{Tcomplex}
Let M be a Riemannian manifold of dimension $n\neq 8$ carrying a parallel even Clifford structure of rank $r>4$, then the almost complex structure $\J $ on $Z$ is integrable.
\end{theorem}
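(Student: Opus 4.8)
The plan is to use the standard Newlander–Nirenberg criterion for the integrability of an almost complex structure: $\J$ is integrable if and only if its Nijenhuis tensor $N_\J$ vanishes, i.e. $N_\J(\xi,\eta)=[\J\xi,\J\eta]-\J[\J\xi,\eta]-\J[\xi,\J\eta]-[\xi,\eta]=0$ for all vector fields $\xi,\eta$ on $Z$. Because $\J$ preserves the splitting $T_SZ=V_S\oplus H_S$, it suffices to check $N_\J$ on pairs of vector fields each of which is either vertical or horizontal. First I would dispose of the purely vertical case: on each fibre $\J$ restricts to the integrable complex structure $\tilde J$ of the Hermitian symmetric space $\tilde\Gr(2,r)$, and since the fibres are totally geodesic submanifolds for the induced connection, $N_\J$ on two vertical fields reduces to the Nijenhuis tensor of $\tilde J$ on $\tilde\Gr(2,r)$, which vanishes.

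Next I would treat the mixed case, one vertical and one horizontal field. Here the standard twistor-space computation (as in Salamon and in O'Brian–Rawnsley) shows that $N_\J(U,X)$ is controlled by the behaviour of $\hat J$ under vertical differentiation together with the fact that the horizontal distribution is defined by a connection; the key identity is that $\hat J(U)=SU$ depends linearly (hence smoothly and ``holomorphically'' in the appropriate sense) on $S$, and that horizontal lifts of vector fields on $M$ commute with the fibrewise complex structure up to curvature terms that only appear in the horizontal-horizontal bracket. I expect this case to go through essentially formally, using that $\n^E$ is a metric connection and $\f$ is connection-preserving, so that parallel transport in $Z$ moves $Z_x$ to $Z_y$ respecting the identifications with $\tilde\Gr(2,r)$.

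The heart of the proof, and the place where Lemma~\ref{curvatura} is used, is the horizontal–horizontal case. For horizontal lifts $\tilde X,\tilde Y$ of vector fields $X,Y$ on $M$, the bracket $[\tilde X,\tilde Y]$ has a horizontal part (the lift of $[X,Y]$) and a vertical part given by the curvature: at $S\in Z$ the vertical part is, up to sign, $R_{X,Y}$ projected onto $V_S\cong T_{\pi(S)}\tilde\Gr(2,r)$, i.e. the component of $R(X,Y)$ that is tangent to $Z_x$ at $S$, which under the bracket-with-$S$ identification is $[R_{X,Y},S]$ (after suitable projection). Writing out $N_\J(\tilde X,\tilde Y)$, the horizontal components cancel automatically because $\pi_*\J=S\pi_*$ reproduces the ordinary twistor almost complex structure on the base, whose Nijenhuis tensor computation leaves only curvature terms; the vertical component of $N_\J(\tilde X,\tilde Y)$ at $S$ is precisely
\[
[R_{SX,SY},S]-S[R_{SX,Y},S]-S[R_{X,SY},S]-[R_{X,Y},S],
\]
which vanishes by Lemma~\ref{curvatura}. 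Thus $N_\J\equiv0$ and $\J$ is integrable.

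The main obstacle is bookkeeping rather than conceptual: one must carefully track the identification of $V_S$ with $T_{\pi(S)}\tilde\Gr(2,r)$ via $\f_*$, the sign conventions in the curvature term of the bracket of horizontal lifts, and the fact that $\hat J(U)=SU$ means the fibrewise action is by left Clifford multiplication by $S$, so that the four terms of the Nijenhuis tensor on horizontal lifts line up exactly with the four terms appearing in Lemma~\ref{curvatura} (with the substitutions $X\mapsto SX$, $Y\mapsto SY$ coming from the $\J$'s acting on the horizontal slots before taking brackets). Once the dictionary between the geometric Nijenhuis expression and the algebraic Clifford identity is set up correctly, the vanishing is immediate; the delicate point is simply to verify that the projection of $R_{X,Y}$ onto $V_S$ is exactly what $[R_{X,Y},S]$ computes, which follows from the description of $T_z\tilde\Gr(2,r)$ as $\mathrm{span}(e_i\cdot e_j)$ given above and the action $\tilde J_z(v)=z\cdot v$.
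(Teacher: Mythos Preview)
Your proposal is correct and follows essentially the same approach as the paper: both split the Nijenhuis tensor into the three cases according to the $V\oplus H$ decomposition, dispose of the vertical--vertical case via the integrability of $\tilde J$ on $\tilde\Gr(2,r)$, handle the mixed case formally using that parallel transport respects $\hat J$, and reduce the vertical part of the horizontal--horizontal case to exactly the identity of Lemma~\ref{curvatura} via O'Neill's formula $\V[X,Y]_S=-[R_{\pi_*X,\pi_*Y},S]$. The only point where the paper is slightly more explicit than your sketch is the horizontal component of $N_\J(X,Y)$: rather than saying it ``cancels automatically,'' the paper extends $S$ to a local section with $\nabla S=0$ at $x$ and observes that $\pi_*(N_\J(X,Y)_S)=N_S(\pi_*X,\pi_*Y)$, which then vanishes at $x$ because $\nabla$ is torsion-free and $\nabla S=0$ there.
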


\begin{proof}   
We proceed as in $14.68$ of  \cite{Besse}. For an arbitrary vector field $W$, we let $\V  (W)$ denote the vertical part of $W$ and $\H  (W)$
the horizontal part of $W$. Let $N_\J $ be the Nijenhuis tensor of $\J $. Let $U$ and $V$ be vertical vector fields and $X$ and $Y$ basic horizontal vector fields. 

Let us first check that $N_\J (U,V)=0$. Since $U$ and $V$ are vertical,  $\J (U)$ and $\J (V)$ are also vertical vector fields. Thus  $N_\J (U,V)=N_{\hat{J}}(U,V)=0$, since $\hat{J}$ is a complex structure.

Now we will check that $N_\J (X,U)=0$. From the two facts that the horizontal transport of the horizontal distribution respects $\hat{J}$, and that $[X,U]$ is vertical if $U$ is, we obtain $[X,\J U]=\J [X,U]$. This reduces the Nijenhuis tensor to $N_\J (X,U)=\J ([\J (X),U])-[\J (X),\J (U)]$. The vertical part of this vanishes by noting that both terms in  $\V ([\J (X),J(U)])=\J (\V [J(X),U])$ are tensorial in $X$. Finally, for the horizontal part of $N_\J (X,U)$ observe first that $\pi_*([J(X),U])=-U\pi_*X$ from which we obtain
\begin{align*}
\pi_*(\J[\J (X),U])&=\pi_*(\J \H [\J (X),U])\\
&=\pi_*(\pi_*^{-1}S \pi_* [\J (X),U]) \\
&=-SU\pi_*X
\end{align*}
By the same reasoning
\begin{align*}
\pi_*([\J (X),\J (U)])&=-\J (U)\pi_*X\\
&=-SU\pi_*X
\end{align*}
and so $N_\J (X,U)=0$.

Finally, we check that $N_\J (X,Y)=0$. This is done by considering the horizontal and vertical components separately. For the horizontal component, we consider $S$ in $Z$ with $\pi(S)=x$ as a section, also denoted $S$, of $Z$ about $x$ and demand that $\n S=0$ at $x$. This gives a local almost complex structure on a neighborhood of $x$ which has an associated Nijenhuis tensor $N_S$. A direct calculation gives agreement, on the neighbourhood of $x$, between the two Nijenhuis tensors considered, explicitly,
\[
\pi_*(N_\J (X,Y)_S)=N_S(\pi_*(X),\pi_*(Y)).
\]
The tensor $N_S$ is then seen to vanish at $x$ as $\n$ is torsion-free and, at $x$, $\n S$ vanishes. Studying the vertical component one recalls O'Neill's formulas for Riemannian submersions (see Chapter 9, \cite{Besse}). In particular, $\V[X,Y]_{\pi(S)}=-[R_{\pi_*X,\pi_*Y}, S]$, which implies $\V (N_\J (X,Y))=0$ precisely by Lemma~$\ref{curvatura}$.
\end{proof}

\begin{theorem}\label{TKahler}
The twistor space $(Z,\J )$ of a Riemannian manifold of dimension $n\neq8$ with a parallel even Clifford structure of rank $r>4$ and
$\Ric>0$ admits a K\"ahler metric.
\end{theorem}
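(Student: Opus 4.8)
The plan is to follow the strategy of B\'erard-Bergery's construction for quaternion-K\"ahler twistor spaces \cite{Bergery}: since Theorem~\ref{Tcomplex} already supplies an integrable complex structure $\J$ on $Z$, it suffices to exhibit a $\J$-Hermitian metric whose fundamental $2$-form is closed. The fibre $\tilde\Gr(2,r)=\SO(r)/(\SO(2)\times\SO(r-2))$ is, for $r>4$, an irreducible Hermitian symmetric space of compact type (the complex quadric $Q_{r-2}$), hence K\"ahler--Einstein with $\Ric_F=\lambda g_F$, $\lambda>0$, and $\SO(r)$ acts on it by holomorphic isometries, with K\"ahler form $\tilde\omega_F$. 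I would define on $Z$ the family of metrics $h_t$ ($t>0$) that equal $\pi^*g$ on each horizontal space $H_S$, equal $t\,g_F$ on each vertical space $V_S\cong T_{\pi(S)}\tilde\Gr(2,r)$, and satisfy $H_S\perp V_S$. Since $S\in\OO(T_{\pi(S)}M)$, the map $\J$ is isometric on $H_S$, and $\tilde J$ is $g_F$-Hermitian on $V_S$, so $h_t$ is $\J$-Hermitian, with fundamental form $\omega_t=\Omega+t\,\tilde\omega_F$, where $\tilde\omega_F$ denotes the fibrewise K\"ahler form (extended by zero on $H$) and $\Omega$ the tautological horizontal $2$-form $\Omega_S(X,Y)=g(S\pi_*X,\pi_*Y)$ (extended by zero on $V$).

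The heart of the argument is the computation of $d\omega_t$, carried out in a local $\n^E$-orthonormal frame $e_1,\dots,e_r$ of $E$ near $x\in M$, with connection forms $\omega_{ij}=-\omega_{ji}$, writing a point of $Z$ over $x$ as $S=\sum_{i<j}a_{ij}J_{ij}$ with $(a_{ij})$ a unit decomposable $2$-vector and setting $\Omega_{ij}:=g(J_{ij}\,\cdot\,,\cdot)$. The parallel condition expresses $\n^g\Omega_{ij}$ through the $\omega_{ij}$, i.e. the $\Lambda^2E$-valued $2$-form $\sum_{i<j}\Omega_{ij}\,e_i\wedge e_j$ on $M$ is parallel. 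Introducing the covariant differentials $\eta_{ij}$ of the functions $a_{ij}$ — these are \emph{purely vertical} $1$-forms on $Z$, since horizontal displacement is parallel transport of $S$ — one obtains $\Omega=\sum_{i<j}a_{ij}\,\pi^*\Omega_{ij}$ and hence $d\Omega=\sum_{i<j}\eta_{ij}\wedge\pi^*\Omega_{ij}$, with no curvature contribution (the even-Clifford version of the identity behind the exterior derivative of the tautological form in the quaternionic case; conceptually, $\Omega$ is the pairing of the tautological section with a parallel form). On the other hand $\tilde\omega_F$ is a universal polynomial expression in the $a_{ij}$ and $\eta_{ij}$; in $d\tilde\omega_F$ the terms involving $da_{ij}$ and the vertical part of $d\eta_{ij}$ reassemble into the fibrewise exterior derivative of $\tilde\omega_F$, which vanishes, while the horizontal part of $d\eta_{ij}$ produces a term built from the curvature $\Theta^E=d\omega+\omega\wedge\omega$ of $\n^E$ and paired against $\Omega$ through the fact that the inclusion $\mu\colon\tilde\Gr(2,r)\hookrightarrow\Lambda^2\mathbb R^r\cong\spin(r)$ is a moment map for the $\SO(r)$-action. (Abstractly, $\Omega$ and $\tilde\omega_F$ are the two constituents of the Sternberg--Weinstein minimal-coupling form on the associated bundle $Z=P\times_{\SO(r)}\tilde\Gr(2,r)$, with $P$ the frame bundle of $(E,\n^E)$.)

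The remaining input is the curvature of $\n^E$. Combining the parallel condition with the curvature identity \eqref{curveq} of \cite{Moroianu} (exactly as used in the proof of Lemma~\ref{curvatura}), the $\spin(r)$-valued curvature of $\n^E$ is proportional to the universal fundamental form, $\Theta^E_{ij}=\kappa\,\Omega_{ij}$ for a real constant $\kappa$ — the even-Clifford analogue of the Alekseevsky--Ishihara formula writing the curvature of the auxiliary bundle in terms of the K\"ahler forms — and $\kappa>0$ whenever $\Ric>0$ (consistently with the classification, since a non-flat parallel even Clifford structure with $\Ric>0$ occurs on a compact symmetric space). Substituting $\Theta^E_{ij}=\kappa\,\Omega_{ij}$ into the computation of $d\tilde\omega_F$, its curvature term becomes a multiple of $\sum_{i<j}\eta_{ij}\wedge\pi^*\Omega_{ij}$; choosing $t>0$ so that $t\kappa$ matches the constant fixed by the normalisations of $\mu$ and $\tilde\omega_F$, this term cancels $d\Omega$ identically, giving $d\omega_t=0$. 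Because $\kappa,\lambda>0$, such a $t$ is genuinely positive, so $h_t$ is a bona fide Riemannian metric and $(Z,\J,h_t)$ is K\"ahler.

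I expect the main obstacle to be extracting the curvature formula $\Theta^E_{ij}=\kappa\,\Omega_{ij}$ from \cite{Moroianu} and, in particular, pinning down the sign of $\kappa$ in terms of $\Ric$ — equivalently, identifying the correct positive value of $t$. Once that is settled, the $\J$-Hermiticity of $h_t$, the identity $d\Omega=\sum_{i<j}\eta_{ij}\wedge\pi^*\Omega_{ij}$, and the vanishing of the fibrewise part of $d\tilde\omega_F$ are routine, and positive-definiteness of the metric rests entirely on $\kappa>0$.
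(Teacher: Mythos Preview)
Your approach is correct and is a genuine alternative to the paper's argument. Both proofs rest on the same curvature input from \cite{Moroianu}, namely the identity \eqref{curveq}, which you repackage as $\Theta^E_{ij}=\kappa\,\Omega_{ij}$ for the $\spin(r)$-valued curvature of $\n^E$, together with the Einstein relation $\Ric=\kappa(n/4+2r-4)$ giving $\kappa>0$ when $\Ric>0$. The difference lies in how this input is used. The paper fixes the fibre metric so that $\|J_{ij}\|^2=1/\kappa$ and then verifies directly that $\n^h\J=0$ by working through the four cases $(\n_E\J)F$ with $E,F$ horizontal or vertical; the key computations reduce, via O'Neill's $A$-tensor and the identity $\V[X,Y]=-[R_{\pi_*X,\pi_*Y},S]$, to showing $A_X(\J Y)=\J(A_XY)$ and $A_X(\J U)=\J(A_XU)$, which are immediate consequences of \eqref{curveq}. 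Your route instead establishes $d\omega_t=0$ for a specific $t$ by interpreting $\omega_t=\Omega+t\,\tilde\omega_F$ as a Sternberg--Weinstein minimal-coupling form on the associated $\tilde\Gr(2,r)$-bundle, with the moment map being the tautological inclusion $\tilde\Gr(2,r)\hookrightarrow\Lambda^2\mathbb R^r$; closedness then follows from the single substitution $\Theta^E_{ij}=\kappa\,\Omega_{ij}$, and integrability of $\J$ (Theorem~\ref{Tcomplex}) upgrades this to K\"ahler. The paper's argument is more self-contained within Riemannian submersion theory and yields $\n\J=0$ without appeal to Theorem~\ref{Tcomplex}; your argument is more conceptual, makes the correct fibre scaling transparent as the reciprocal of $\kappa$, and situates the construction in the general framework of symplectic fibrations over a base whose curvature is Hamiltonian.
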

\begin{proof}
 In this case the manifold $(M,g)$ is Einstein with $\Ric=\kappa(n/4+2r-4)$ (Proposition~2.10 \cite{Moroianu}). Using the condition that $\Ric >0$, we choose a metric  $h$ on $Z$ such that $\pi$ is a Riemannian submersion with totally geodesic fibres isometric to $\tilde \Gr(2,r)$ with K\"ahler metric and $\Ric=2r\kappa$, so that the collection $\{J_{ij}\}$ forms a mutually orthogonal frame and $\| J_{ij}\|^2 = 1/\kappa$. Let $U$ and $V$ be vertical vector fields and $X$ and $Y$ basic horizontal vector fields. The theorem follows a similar argument to that given in 14.81 of \cite{Besse}. We consider seperately the four cases coming from $(\n_E\J) F$ where $E,F$ may be horizontal or vertical.

First we show $\n_U\J=0$. Restricting to its action on a vertical field, we immediately get $(\nabla_U\J )V=0$ as the fibre is K\"ahler and totally geodesic. In order to prove $(\nabla_U\J )X=0$, it suffices to consider only the horizontal component (again as the fibres are totally geodesic). By appropriately choosing a local orthonormal frame for $E$, we may assume that $S=J_{12}$ and $U=\lambda J_{s1}$ with $s>2$. The Koszul formula and the relationship between the vertical component of the Lie bracket and the curvature mentioned in the previous proof give, at $S$,  
\begin{align*}
2h(\nabla_UX,Y)&=-h([X,Y],U)\\
&=\lambda h([R_{\pi_* X, \pi_* Y},J_{12}], J_{s1}).
\end{align*}
Recalling Equation~\ref{curveq} we deduce $h(\nabla_UX,Y)=-\frac{1}{2}\lambda g(J_{s2} \pi_*X, \pi_*Y)$. Using this result, $\pi_*(\nabla_UX)_S=-\frac{1}{2}\lambda J_{s2}\pi_*X$, we obtain
\begin{align*}
\pi_*(\J \nabla_UX)_S&=-\frac{1}{2} J_{12}\lambda J_{s2}\pi_*X\\
&=\frac{1}{2}U\pi_*X.
\end{align*}
Similarly, one proves that
\[
\pi_*(\nabla_U \J X)=\frac{1}{2}U\pi_*X,
\]
from which we conclude $\pi_*((\nabla_U\J )X)=0$.

Second, we show $\n_X\J=0$. Recall O'Neill's $A$ tensor
\begin{align*}
A_EF=\V \nabla_{\H E}\H F + \H \nabla_{\H E}\V F
\end{align*}
where $E$ and $F$ are arbitrary vectors. We show, as an initial calculation, that $A_X(\J  Y)=\J (A_XY)$ and $\J (A_XU)=A_X(\J U)$. In our situation we note the following decomposition into horizontal and vertical components
\begin{align*}
\n_X U &=\V \n_XU + A_XU \\
\n_X Y &=A_XY + \H \n_XY.
\end{align*}
By Proposition~9.24 in \cite{Besse}, we have $A_XY=\frac{1}{2}\V [X,Y]$ so at $S=J_{12}$, we get $A_XY=-\frac{1}{2}[R_{\pi_*X,\pi_*Y},J_{12}]$ and the claim that $A_X(\J Y)=\J(A_XY)$ is equivalent to
\[
[R_{X',J_{12}Y'},J_{12}]=J_{12}[R_{X',Y'},J_{12}]
\]
 where,for the sake of notation, we have denoted  $X'=\pi_* X$ and $Y'=\pi_* Y$. Equation~\ref{curveq} gives the result as
\begin{align*}
J_{12}[R_{X',Y'},J_{12}]&=J_{12}(\kappa\sum_{s>2}g(J_{s1}X',Y')J_{s2}-g(J_{s2}X',Y')J_{s1})\\
&=\kappa\sum_{s>2}-g(J_{s1}X',Y')J_{s1}-g(J_{s2}X',Y')J_{s2}
\end{align*}
and similarly
\begin{align*}
[R_{X',J_{12}Y'},J_{12}]&=\kappa\sum_{s>2}g(J_{s1}X',J_{12}Y')J_{s2}-g(J_{s2}X',J_{12}Y')J_{s1}\\
&= \kappa\sum_{s>2}-g(J_{s2}X',Y')J_{s2}-g(J_{s1}X', Y')J_{s1}.
\end{align*}
For the second claim, we use the skew symmetry of $A$, $h(A_XY,U)=-h(Y,A_XU)$ to obtain
\begin{align*}
h(\J A_XU,Y) &= h( U, A_X \J Y) \\
&= h(U,\J A_XY)\\
&=h(A_X\J U,Y).
\end{align*}
Therefore $\J(A_XU)=A_X(\J U)$.

We apply this result to $(\n_X\J)U$ where
\begin{align*}
(\n_X\J)U
	& = \n_X(\J U) - \J \n_XU \\
	& = \V \n_X(\J U) + A_X\J U - \J\V\n_XU  - \J A_XU  \\
	& = \V \n_X(\J U) - \J\V\n_XU 
\end{align*}
Taking the inner product of each term with $V$ and studying the respective Koszul formulas gives the result that $(\n_X \J)U=0$. By a similar calculation for $(\n_X\J)Y$,
\[
(\n_X\J)Y = \H \n_X(\J Y) - \J\H\n_XY.
\]
As this is horizontal we may use a similar idea to that presented in the preceding proof. Specifically, we may consider $S\in Z$ with $x=\pi(S)$ as a section over a neighborhood of $x$ with $\n S=0$ at $x$. Studying the appropriate Koszul formulas one concludes that, at $x$,
\[
(\n_X\J)Y=(\n_{\pi_*X}S)\pi_*Y.
\]
The result now follows since, at $x$,
\begin{align*}
(\n_X\J)Y &= \pi_*^{-1} ( \n_{\pi_* X} (S\pi_* Y) - S \n_{\pi_* X} \pi_* Y) \\
&= \pi_*^{-1} ((\n_{\pi_*X} S)\pi_*Y) = 0.\qedhere
\end{align*}
\end{proof}

The following tables summarize our findings.
\begin{center}
\begin{tabular}{|c|c|c|c|c|}
\hline
$r$ & $M$ & $\dim(M)$ & fibre of $Z$ & type of $Z$\\
\hline
\hline
$r$ & $\Cl_r^0$ representation & $N_0(r)n$  & $\tilde\Gr(2,r)$ & complex, K\"ahler
\\
\hline
3& QK manifold & $4n$ & $\S^2$ & complex, K\"ahler if $\Ric>0$
\\
\hline
4& $M_1\times M_2$, $M_i$ QK & $4(n_1+n_2)$ & $\S^2\times \S^2$ & complex, K\"ahler if $\Ric(M_i)>0$ \\
\hline
5& QK &8&$\Sp(2)/\U(2)$ & complex if locally symmetric\\
\hline
6& K\"ahler&8&$\U(4)/\U(2)\times \U(2)$& complex if Bochner tensor $\equiv 0$ \\
\hline
7& $\Spin(7)$ holonomy&8&$\tilde\Gr(2,7)$& not complex\\
\hline
8& Riemannian&8&$\SO(8)/\U(4)$& complex if Weyl tensor $\equiv 0$\\
\hline
\end{tabular}
\vskip .2cm
Table 3. Twistor spaces for low rank and dimension 8
\end{center}
\vskip .6cm

\begin{center}
\begin{tabular}{|c|c|c|c|c|}
\hline
$r$ &$M$&$\dim(M)$&fibre of $Z$&type of $Z$\\
\hline
\hline
5&$\Sp(k+2)/(\Sp(k)\times\Sp(2))$&8$k$, $k\geq 2$&$\Sp(2)/\U(2)$ &K\"ahler\\
\hline
6&$\SU(k+4)/\mathrm{S}(\U(k)\times\U(4))$&$8k$, $k\geq 2$&$\U(4)/\U(2)\times \U(2)$& K\"ahler\\
\hline
8&$\SO(k+8)/(\SO(k)\times\SO(8))$&$8k$, $k\geq 2$&$\SO(8)/\U(4)$& K\"ahler\\
\hline
9&$\F_4/\Spin(9)$&16&$\tilde\Gr(2,9)$&K\"ahler\\
\hline
10&$\E_6/(\Spin(10)\cdot\U(1))$&32&$\tilde\Gr(2,10)$&K\"ahler\\
\hline
12&$\E_7/(\Spin(12)\cdot\SU(2))$&64&$\tilde\Gr(2,12)$&K\"ahler\\
\hline
16&$\E_8/\Spin^+(16)$&128&$\tilde\Gr(2,16)$&K\"ahler\\
\hline
\end{tabular}
\vskip .2cm
Table 4. Twistor spaces for higher rank
\end{center}

For the non-compact dual spaces of these symmetric spaces, the twistor space is only complex as the negative curvature obstructs the construction of an appropriate metric on the fibres.

Finally, we use the following observation of Nagy \cite{Nagy1} to construct nearly K\"ahler metrics on the twistor space. Consider a Riemannian submersion with totally geodesic fibres
\[F \rightarrow (Z, h) \rightarrow M\]
and let $T Z= V \oplus H$ be the corresponding splitting of $T Z$. Suppose that
$Z$ admits a complex structure $\J$ compatible with $h$ and preserving $V$ and $H$ such
that $(Z, \J, h )$ is a K\"ahler manifold. Consider now the Riemannian metric on $Z$
defined by
\[
\tilde h(X, Y ) = \frac{1}{2}
 h(X, Y )\mbox{ for }X, Y \in V,
 \]
\[
 \tilde h(X, Y )  =  h(X, Y )\mbox{ for }X, Y \in H.
 \]
 The metric $\tilde h$ admits a compatible almost complex structure $\tilde \J$ given by $\tilde \J_{|V }= -\J$
and $\tilde\J_{|H} = \J$. The next proposition is proved in \cite{Nagy1}.
\begin{proposition}\cite{Nagy1} The manifold $(Z,\tilde \J, \tilde h)$ is nearly K\"ahler.
\end{proposition}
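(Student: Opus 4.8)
The plan is to reduce the statement to Nagy's general result \cite{Nagy1} by verifying that the twistor space $(Z, \J, h)$ constructed in Theorem~\ref{TKahler} satisfies exactly the hypotheses of that result, and then invoking it directly. Concretely, one takes $F = \tilde\Gr(2,r)$, the bundle $\pi : (Z,h) \rightarrow M$ with $h$ the metric built in the proof of Theorem~\ref{TKahler}, and $\J$ the almost complex structure on $Z$. First I would recall from that proof that $\pi$ is a Riemannian submersion with totally geodesic fibres isometric to $\tilde\Gr(2,r)$ equipped with its K\"ahler metric, so the fibration is of the required type; the splitting $TZ = V \oplus H$ is precisely the one coming from the Levi-Civita connection on $M$, used throughout Section~\ref{mainsec}.

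Next I would check the three remaining conditions of Nagy's setup. Integrability of $\J$ is Theorem~\ref{Tcomplex}. That $\J$ is compatible with $h$ follows from the choice of $h$: on $V$ it is the K\"ahler metric of $\tilde\Gr(2,r)$, hence compatible with $\hat J = \J|_V$, and on $H$ it is the pullback of $g$, compatible with $\J|_H$ since $S \in Z$ acts as an orthogonal complex structure on $T_{\pi(S)}M$. That $\J$ preserves $V$ and $H$ is immediate from the definition $\J(U+X)_S = \hat J(U) + \pi_*^{-1}(S\pi_*X)$. Finally, $(Z,\J,h)$ being K\"ahler is the content of Theorem~\ref{TKahler} itself (the metric $h$ was chosen so that $\n\J = 0$). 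With all hypotheses in place, the construction of $\tilde h$ by halving the metric in the vertical directions and of $\tilde\J$ by reversing sign on $V$ are exactly as in the statement, and Nagy's theorem yields that $(Z,\tilde\J,\tilde h)$ is nearly K\"ahler.

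There is essentially no obstacle here: the proposition is an application of an external result, and the only real work — constructing the K\"ahler twistor space and proving integrability — has already been done in Theorems~\ref{Tcomplex} and \ref{TKahler}. The mild point requiring care is bookkeeping: one must make sure the metric $h$ used in Theorem~\ref{TKahler} is genuinely a Riemannian submersion metric with \emph{totally geodesic} fibres (which it is, by construction) and that the fibre metric is the K\"ahler one, so that Nagy's hypothesis on the fibration is literally satisfied rather than merely analogous. Thus the proof reads: the hypotheses of the proposition of \cite{Nagy1} hold for $F = \tilde\Gr(2,r) \to (Z,h) \to M$ by Theorems~\ref{Tcomplex} and~\ref{TKahler} together with the explicit form of $h$ and $\J$; the conclusion follows.

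\begin{proof}
This is a direct application of the result of Nagy recalled above. Take $F=\tilde\Gr(2,r)$ and the Riemannian submersion $\pi:(Z,h)\to M$, where $h$ is the metric constructed in the proof of Theorem~\ref{TKahler}; by construction its fibres are totally geodesic and isometric to $\tilde\Gr(2,r)$ with its K\"ahler metric, and $TZ=V\oplus H$ is the splitting induced by the Levi-Civita connection of $M$. The almost complex structure $\J$ preserves $V$ and $H$ by its very definition, is compatible with $h$ (on $V$ because $h|_V$ is the K\"ahler metric of the fibre and $\J|_V=\hat J$ is its complex structure, on $H$ because $S\in Z$ is an orthogonal complex structure on $T_{\pi(S)}M$ and $h|_H=\pi^*g$), is integrable by Theorem~\ref{Tcomplex}, and satisfies $\n^h\J=0$ by Theorem~\ref{TKahler}. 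Hence $(Z,\J,h)$ is a K\"ahler manifold fitting the hypotheses of \cite{Nagy1}, and defining $\tilde h$ and $\tilde\J$ as in the statement, the proposition of \cite{Nagy1} gives that $(Z,\tilde\J,\tilde h)$ is nearly K\"ahler.
\end{proof}
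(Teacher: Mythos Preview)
Your proposal conflates the Proposition with the Corollary that follows it. In the paper, the Proposition is Nagy's \emph{general} result: the $Z$, $h$, $\J$ appearing in its statement are those of the abstract setup introduced in the paragraph immediately preceding it (an arbitrary Riemannian submersion $F\to(Z,h)\to M$ with totally geodesic fibres, equipped with a compatible K\"ahler structure preserving the splitting), not the specific Clifford twistor space. Accordingly, the paper offers no proof at all---it simply writes ``The next proposition is proved in \cite{Nagy1}'' and places a \qed. There is nothing to verify; it is a citation.

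What you have written is, in substance, a proof of Corollary~\ref{Cnearly}: you check that the twistor space of a parallel even Clifford structure with $\Ric>0$ satisfies Nagy's hypotheses (via Theorems~\ref{Tcomplex} and~\ref{TKahler} and the explicit form of $h$ and $\J$), and then you invoke the Proposition. That argument is fine for the Corollary, but it does not establish the Proposition itself---indeed your last line literally says ``the proposition of \cite{Nagy1} gives\ldots'', which is circular if the goal is to prove that proposition. If you intended to supply the content the paper omits for the Corollary, relabel accordingly; if you intended to prove Nagy's general statement, you would need an actual computation of $(\tilde\nabla_X\tilde\J)X$ in the abstract submersion setting, which your proposal does not attempt.
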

\qed
\begin{corollary}\label{Cnearly} 
The twistor space $Z$ of a Riemannian manifold with a parallel even Clifford structure of rank $r\geq3$ and
$\Ric>0$, admits an almost complex structure $\tilde\J $ and a metric $\tilde h$ such that $(Z,\tilde \J , \tilde h)$ is nearly K\"ahler.

\end{corollary}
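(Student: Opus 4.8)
The plan is to reduce Corollary~\ref{Cnearly} to the Proposition of Nagy quoted immediately above, so the only real work is to verify that the hypotheses of that Proposition are met in each of the two regimes: rank $r\ge 5$ (and $n\ne 8$), and the low-rank cases $r=3,4$ (as well as $n=8$). For $r\ge 5$ and $n\ne 8$, Theorem~\ref{Tcomplex} gives that $\J$ is integrable and Theorem~\ref{TKahler} (using $\Ric>0$) produces a metric $h$ on $Z$ making $\pi\colon(Z,h)\to M$ a Riemannian submersion with totally geodesic fibres isometric to $\tilde\Gr(2,r)$ with its Kähler metric, and $(Z,\J,h)$ Kähler. By construction $\J$ preserves the splitting $TZ=V\oplus H$ (it acts as $\hat J$ on $V$ and as $S$ transported horizontally on $H$), so all hypotheses of Nagy's Proposition hold; applying it yields the almost complex structure $\tilde\J$ and the rescaled metric $\tilde h$ with $(Z,\tilde\J,\tilde h)$ nearly Kähler.

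Next I would treat the remaining cases. For $r=3$ the twistor space is the classical twistor space of a quaternion-Kähler manifold; since $\Ric>0$ the Salamon--Bérard-Bergery metric $h$ makes $(Z,\J,h)$ Kähler-Einstein with $\pi$ a Riemannian submersion with totally geodesic fibres $\S^2$, so again Nagy's Proposition applies and in fact recovers the classical nearly Kähler twistor metric of Grantcharov et al. For $r=4$, $M$ is locally a product $M_1\times M_2$ of quaternion-Kähler manifolds each with $\Ric>0$, and $Z$ is the product of the two quaternion-Kähler twistor spaces with fibre $\S^2\times\S^2$; the product of the two Kähler metrics gives the required $(Z,\J,h)$, and Nagy's Proposition applies to the product submersion. (One may also observe that for $r=4$ the argument of Theorems~\ref{Tcomplex} and~\ref{TKahler} goes through verbatim once one knows the local product structure, so no separate treatment is strictly needed.) The dimension-$8$ cases with $r=5,6,8$ and $M$ one of the listed symmetric spaces, or the higher-rank symmetric spaces in Table~4, are handled exactly as in the $r\ge5$, $n\ne8$ argument since those spaces also have a parallel even Clifford structure with $\Ric>0$; $r=7$ is excluded because $\Spin(7)$-holonomy manifolds are Ricci-flat, so $\Ric>0$ fails there and no claim is made. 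In every admissible case we obtain a metric $h$ with $(Z,\J,h)$ Kähler and $\pi$ a Riemannian submersion with totally geodesic fibres, so Nagy's Proposition delivers $(Z,\tilde\J,\tilde h)$ nearly Kähler.

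The one point that requires a little care, rather than being purely formal, is checking that in the low-rank cases $r=3,4$ the Kähler metric $h$ one writes down really does make $\pi$ a Riemannian submersion with \emph{totally geodesic} fibres with $\J$ preserving $V$ and $H$: for $r=3$ this is contained in the references \cite{Bergery,Salamon}, and for $r=4$ it follows from the product structure together with the $r=3$ case applied to each factor. Once that bookkeeping is in place, the corollary is immediate. Thus the main obstacle is not a hard estimate but simply the need to cover the cases not already subsumed by Theorems~\ref{Tcomplex} and~\ref{TKahler}, namely $r\in\{3,4\}$ and, trivially, to note why $r=7$, $n=8$ is omitted; everything else is a direct invocation of the preceding results and of Nagy's Proposition. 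I would therefore write the proof as: ``By Theorems~\ref{Tcomplex} and~\ref{TKahler} (for $r\ge5$, $n\ne8$), and by the quaternion-Kähler twistor theory of \cite{Bergery,Salamon} together with the product structure of \cite{Moroianu} (for $r=3,4$), the hypothesis $\Ric>0$ yields a metric $h$ on $Z$ making $(Z,\J,h)$ Kähler with $\pi$ a Riemannian submersion with totally geodesic fibres and $\J$ preserving the horizontal and vertical distributions. The conclusion now follows from the Proposition above.''

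\begin{proof}
When $r\ge 5$ and $n\ne 8$, Theorem~\ref{Tcomplex} shows that $\J$ is integrable, and Theorem~\ref{TKahler}, using the hypothesis $\Ric>0$, produces a metric $h$ on $Z$ for which $\pi\colon(Z,h)\to M$ is a Riemannian submersion with totally geodesic fibres isometric to $\tilde\Gr(2,r)$ endowed with its K\"ahler metric, and such that $(Z,\J,h)$ is K\"ahler. By the very construction of $\J$ in Section~\ref{mainsec}, $\J$ preserves the splitting $TZ=V\oplus H$, acting as $\hat J$ on $V$ and as $S\mapsto S\,\cdot$ (transported horizontally) on $H$. Hence all the hypotheses of the Proposition above are satisfied, and we obtain an almost complex structure $\tilde\J$ and a metric $\tilde h$ with $(Z,\tilde\J,\tilde h)$ nearly K\"ahler.

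For $r=3$ the space $Z$ is the classical twistor space of the quaternion-K\"ahler manifold $M$; since $\Ric>0$, the metric of \cite{Bergery,Salamon} makes $(Z,\J,h)$ K\"ahler--Einstein, with $\pi$ a Riemannian submersion with totally geodesic fibres $\S^2$ and $\J$ preserving $V$ and $H$. For $r=4$, by \cite{Moroianu} the manifold $M$ is locally a Riemannian product $M_1\times M_2$ of quaternion-K\"ahler manifolds with $\Ric(M_i)>0$, and $Z$ is correspondingly the product of the two quaternion-K\"ahler twistor spaces, with fibre $\S^2\times\S^2$; equipping $Z$ with the product of the two K\"ahler metrics of \cite{Bergery,Salamon} again realizes $(Z,\J,h)$ as K\"ahler with $\pi$ a Riemannian submersion with totally geodesic fibres and $\J$ preserving the two distributions. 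The same discussion applies to the remaining cases of Table~3 with $r=5,6,8$ and to all the higher-rank symmetric spaces of Table~4, each of which carries a parallel even Clifford structure with $\Ric>0$ and thus falls under Theorems~\ref{Tcomplex} and~\ref{TKahler}.

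In every case, therefore, the hypothesis $\Ric>0$ yields a metric $h$ on $Z$ making $(Z,\J,h)$ K\"ahler, with $\pi$ a Riemannian submersion with totally geodesic fibres and $\J$ preserving the horizontal and vertical distributions. Applying the Proposition above gives the desired $\tilde\J$ and $\tilde h$ with $(Z,\tilde\J,\tilde h)$ nearly K\"ahler.
\end{proof}
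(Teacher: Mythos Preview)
Your approach is exactly that of the paper: reduce to Nagy's Proposition once one has the K\"ahler data from Theorem~\ref{TKahler} (or, for $r=3,4$, from the quaternion-K\"ahler twistor theory). The paper in fact gives no proof beyond a \qed, so your elaboration of the low-rank cases $r=3,4$ is more detailed than what the authors write and is entirely appropriate.

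There is, however, one genuine slip in your case analysis. You assert that ``the remaining cases of Table~3 with $r=5,6,8$ \dots\ fall under Theorems~\ref{Tcomplex} and~\ref{TKahler}'', but both of those theorems are stated only for $n\neq 8$, so they do not apply to the $8$-dimensional rows of Table~3. Moreover, for $n=8$ the twistor space is \emph{not} in general complex (Table~3 records the extra integrability conditions: locally symmetric for $r=5$, Bochner-flat for $r=6$, conformally flat for $r=8$), so a generic $8$-dimensional manifold with $\Ric>0$ and a parallel rank-$8$ even Clifford structure---i.e.\ an arbitrary Riemannian $8$-manifold with $\Ric>0$---will not furnish a K\"ahler $(Z,\J,h)$, and Nagy's Proposition cannot be invoked. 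Your exclusion of $r=7$ via Ricci-flatness is correct, but the other $n=8$ cases are not handled by the argument you give. The paper is itself somewhat loose on this point (the corollary is stated for all $r\ge 3$ without a dimension hypothesis, while the supporting theorems exclude $n=8$); the cleanest fix is simply to restrict the corollary to the situations in which one actually has the K\"ahler structure on $Z$, i.e.\ $r=3,4$ or $r\ge 5$ with $n\neq 8$.
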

\qed

In our case, using the definition of the almost complex structure, one can easily check that this almost complex structure is never integrable.

We conclude  by pointing out that even though a classification of parallel even Clifford strucures was given in \cite{Moroianu}, and one can try to deal with each of these cases seperately, our approach does not rely on this classification (except for dimension $8$ in which the curvature condition is not automatically satisfied). Furthermore, the constructions above can be studied in a more general context. One could check integrability conditions of these twistor spaces for manifolds with non parallel even Clifford structures, as in \cite{Friedrich}. In fact, in order for the twistor space to be complex, Lemma \ref{curvatura} should be satisfied for every $S$ in the twistor space. One nice example is given by $\S^1\times \S^{15}$, which admits a non parallel $\Cl_9^0$ structure but its twistor space is a complex manifold which cannot be  K\"ahler since its first Betti number is odd.

\end{document}